\newtheorem{definition}{Definition}[section]
\newtheorem{theorem}{Theorem}[section]
\newtheorem{corollary}{Corollary}[section]
\newtheorem{remark}{Remark}[section]
\newtheorem{proposition}{Proposition}[section]
\author{Olga~S.~Rozanova}
\address{Mathematics and Mechanics Faculty, Moscow State University, Moscow
119992, Russia.} \email{rozanova@mech.math.msu.su}
\title[Frozen and almost frozen structures]
{Frozen and almost frozen structures in the compressible rotating
fluid}
\subjclass[2000]{Primary 35L65; Secondary 76N15; 76U05}
\keywords{compressible fluid, barotropic fluid, steady state, frozen
structure}
\date{ }
\begin{document}

\begin{abstract} We study a possibility of existence of localized
two-dimensional structures, both smooth and non-smooth, that can
move without significant change of their shape in a leading stream
of compressible barotropic fluid on a rotating plane.
\end{abstract}

\maketitle




\section{Bidimensional model of compressible fluid}\label{Sec1}

We consider the  system of barotropic gas dynamics in 2D on the
rotating plane,
\begin{equation}\label{2d_U}
 \varrho(\partial_t {\bf U} +  ({\bf U}\cdot \nabla ){\bf U} +
 {\mathcal L}{\bf U}) + \nabla P(\rho) = 0,\quad P=C
\rho^\gamma, \quad C
>0,
\end{equation}
\begin{equation}\label{2d_rho}
\partial_t \varrho +  {\rm div} ( \varrho {\bf U}) =0,
\end{equation}
for density $\varrho,$ vector of velocity ${\bf U}$ and pressure
$P$, $t\ge 0$, $x\in{\mathbb R}^2$.  Here $\mathcal L = l L$,
$\, L = \left(\begin{array}{cr} 0 & -1 \\
1 & 0
\end{array}\right)$,
 $l$ is the Coriolis parameter
assumed to be a positive constant,  $\gamma\in (1,2)$ is the
adiabatic exponent.

Under suitable boundary conditions system (\ref{2d_U}),
(\ref{2d_rho}) implies conservation of mass, momentum and total
energy.

Many models  of ocean, atmosphere and plasma are approximately two
dimensional.   In particular, in \cite{Obukhov} a procedure of
averaging over the hight in a three-dimensional model of atmosphere
consisting of compressible rotating polytropic gas was proposed (see
also \cite{Pedloski}).

Let us introduce a new variable $\pi=P^{\frac{\gamma-1}{\gamma}}.$
 For  $\pi(t,x)$ and ${\bf U}(t,x)$
we obtain the following system:
$$
\partial_t {\bf U} +  ({\bf U}\cdot \nabla ){\bf U} +
l L {\bf U} +  c_0\,\nabla \pi =0,\quad
\partial_t \pi +  (\nabla \pi \cdot  {\bf U}) +
(\gamma-1) \pi \, {\rm div}  {\bf U}=0,
$$
with $c_0= \frac{\gamma}{\gamma-1} C^{\frac{1}{\gamma}}.$ Then we
change the coordinate system in such a way that the origin of the
new system ${\bf x}=(x_1,x_2)$ is located at a point ${\bf
X}(t)=(X_1(t),X_2(t))$ (here and below we use the lowercase letters
for $\bf x$ to denote the local coordinate system). Now ${\bf U}=
{\bf u}+{\bf V},$ where
 ${\bf V}(t)=(V_1(t),V_2(t))=(\dot X_1(t),\dot X_2(t))$.
Thus, we obtain a new system
\begin{eqnarray}\label{sys1}
\partial_t {\bf u} + ({\bf u} \cdot \nabla ){\bf u} +
\dot {\bf V}+ l \, L \, ({\bf u}+{\bf V})+ c_0\, \nabla \pi = 0,\\
\quad
\partial_t \pi +  (\nabla \pi \cdot {\bf u}) + (\gamma-1)\,\pi\,{\rm div}\,{\bf
u}\, =\,0.\label{sys2}\end{eqnarray}
 Given a vector ${\bf V}$, the
trajectory can be found by integration from the system $\dot
X_i(t)=V_i(t),\, i=1,2.$

In our previous papers  \cite{RYH2010}, \cite{RYH2012} we used this
approach to find a position of atmospherical vortex, associated with
a tropical cyclone.

\section{Local and bearing fields separation}\label{Sec3}

We assume that the pressure field $\pi(t,x_1,x_2)$ can be separated
into two part as $\pi=\pi_0+\pi_1$, where $\pi_1$ is somewhat
stronger, however  more uniform than $\pi_0$. We call $\pi_0$ the
local field and $\pi_1$ the  bearing field.

Then we obtain from (\ref{sys1}), (\ref{sys2})
$$\left[\partial_t {\bf u} +
({\bf u} \cdot \nabla ){\bf u} + {\mathcal L} {\bf u} + c_0\, \nabla
\pi_0 \right]+ \left[\dot {\bf V}+ {\mathcal L} \, {\bf V}+ c_0\,
\nabla \pi_1\right] = 0,\quad \qquad\phantom{\pi_1\,{\rm div}\,{\bf
u}\,) =\,0}
$$
$$
\left[\partial_t \pi_0 +  (\nabla \pi_0 \cdot {\bf u}) +
(\gamma-1)\,\pi_0\,{\rm div}\,{\bf u}\right]\,+\, \left[\partial_t
\pi_1 +(\nabla \pi_1 \cdot {\bf u}) + (\gamma-1)\,\pi_1\,{\rm
div}\,{\bf u}\,\right] =\,0.
$$
If we assume that we can find the couple $({\bf u}, \pi_0)$ from the
system
\begin{equation}
\label{u-nonsteady}
\partial_t {\bf u} + ({\bf u} \cdot \nabla ){\bf u} +  {\mathcal L}
\,{\bf u} + c_0\, \nabla \pi_0 =Q(t,x),
\end{equation}
\begin{equation}\label{pi_0-nonsteady}
\partial_t \pi_0 +  (\nabla \pi_0 \cdot {\bf u}) +
(\gamma-1)\,\pi_0\,{\rm div}\,{\bf u}=\,0,  \end{equation} with a
certain function $Q(t,x)$,
 then we
get a  linear equation for $\pi_1$,
\begin{equation}\label{pi1}
\partial_t \pi_1 +  (\nabla \pi_1 \cdot {\bf u})+ \pi_1 {\rm div}{\bf u}= 0,
\end{equation}
which  can be solved for any initial condition $\pi_1(0,{\bf x})$.
Further, (\ref{u-nonsteady}) and (\ref{sys1}) imply
\begin{equation*}
\dot {\bf V}(t)+ {\mathcal L} {\bf V}(t) +c_0 \nabla \pi_1(t,{\bf
x})=- Q(t,x).
\end{equation*}
Now we set
\begin{equation}
\label{Q} Q =-c_0\left[\nabla \pi_1(t,{\bf x})-
\nabla\pi_1(t,{\bf 0})\right].
\end{equation}

Thus, we associate the couple $({\bf u}, \pi_0)$ with the local
field and the couple $({\bf V}, \pi_1)$ with the bearing field. As
we can see, the couple $({\bf u}, \pi_0)$ is independent of the
bearing field "up to the function $Q$." If the solution $({\bf u},
\pi_0)$ to system (\ref{u-nonsteady}), (\ref{pi_0-nonsteady}) is
found, we can find $({\bf V}, \pi_1)$ from linear equations.

If $Q=0$, then the bearing field does not influence on the local
field and in this sense we will talk on a complete separation of he
bearing and local fields. Evidently, this will be only if $\pi_1$ is
linear with respect to the space variables. If $|Q|<\delta$ for
sufficiently small $\delta>0$, we can talk about a "$\delta$-
approximate" separation of fields, $Q$ plays a role of discrepancy.
This discrepancy is a measure of separability of the local and
bearing fields.

The position of the center of the moving coordinate system  can be
found  from the following equation:
\begin{equation}
\label{xQ_exact}
\ddot {\bf X}(t)+ {\mathcal L} \dot{\bf X}(t) +c_0 \nabla
\pi_1(t,{\bf x})\Big|_{{\bf x}=0}= 0.
\end{equation}
As follows from the computer modeling made in \cite{RYH2012}, even
for real meteorological data the position of center of tropical
cyclone found by means of equation (\ref{xQ_exact}) is quite
accurate.



\section{Steady nonhomogeneous incompressible flow}\label{frozen}

  We  look for a solution of the local field with special properties,
namely, a steady divergence free solution. If the discrepancy $Q=0$,
then this solution can be considered as a  "frozen pattern" into a
leading stream. If the discrepancy is small, we can talk only on an
"almost frozen pattern", since the right-hand side in equation
(\ref{u-nonsteady}), that depends on the properties of the bearing
field, influences the solution.


Thus, let us assume that
$${\bf u} \mbox{\quad does not depend of}\quad
t,\quad {\bf u}(0)=0 \mbox{\quad and \quad} {\rm div \,\bf u} =0.$$
This means that there exists a stream function  $\Phi(x_1,x_2)$ such
that
\begin{equation*}\label{u}{\bf u}=\nabla_\bot \Phi = (\Phi_{x_2}, - \Phi_{x_1}).\end{equation*}
Equations (\ref{pi_0-nonsteady}) and (\ref{u-nonsteady}) result
\begin{equation}\label{constraint}(\nabla\pi_0 \cdot \nabla_\bot \Phi) = 0,\end{equation}
\begin{equation}
\label{Phi_pi_0} (\nabla_\bot \Phi \cdot \nabla)\nabla_\bot \Phi +
lL \nabla_\bot \Phi
+ c_0 \nabla \pi_0=0.
\end{equation}

 We take the inner product
of (\ref{Phi_pi_0}) and $\nabla_\bot \Phi$ and get
\begin{equation}\label{master}
 \Phi_{x_1 x_2} \,( ( \Phi_{x_2})^2-(\Phi_{x_1})^2))
  =\,
 \Phi_{x_1}\, \Phi_{x_2}\,(\Phi_{x_2 x_2}-
\Phi_{x_1 x_1}).
\end{equation}
 The solution of (\ref{master}) have to satisfy the identity
\begin{equation}
\label{cond_pi} \nabla \times ((\nabla_\bot \Phi \cdot
\nabla)\nabla_\bot \Phi )=0.
\end{equation}

Equations (\ref{master}) and (\ref{cond_pi}) are equivalent to
\begin{equation}
\label{J_master} J(\Phi,|\nabla \Phi|^2)=0,
\end{equation}
and
\begin{equation}
\label{J} J(\Phi,\Delta \Phi)=0,
\end{equation}
respectively, where $J$ is the Jacobian. The stream function $\Phi$
has to satisfy both (\ref{J_master}) and (\ref{J}), and for smooth
$\Phi$ the function $\pi_0$ can be restored up to a constant as
$$
\pi_0\,=\, -\, \frac{1}{ c_0}\, \left[\int(\Phi_{x_2} \Phi_{x_1
x_2}-\Phi_{x_1} \Phi_{x_2 x_2}+  l \Phi_{x_1})\,dx_1
+\right.$$$$\left. \int(-\Phi_{x_2} \Phi_{x_1 x_1}+\Phi_{x_1}
\Phi_{x_1 x_2}+ l \Phi_{x_2})\,dx_2 \right]=
$$
\begin{equation}\label{pi_formula}
- \frac{1}{c_0}\, l\Phi + \left[\int(\Phi_{x_2} \Phi_{x_1
x_2}-\Phi_{x_1} \Phi_{x_2 x_2})\,dx_1 + \int(-\Phi_{x_2} \Phi_{x_1
x_1}+\Phi_{x_1} \Phi_{x_1 x_2})\,dx_2 \right].
\end{equation}

There are two evident classes of solution to (\ref{J_master}) and
(\ref{J}): $$\Phi=\bar\Phi(x_1^2+x_2^2)$$ and
$$\Phi=\bar\Phi(x_i),\,i=1,2,$$ with arbitrary smooth function of one
variable $\bar\Phi$. In particular, $\bar\Phi$ can be compactly
supported.

The first case corresponds to a steady vortex.  In the
meteorological model this  pattern can be associated with a tropical
cyclone in the mature stage of development.

The second case corresponds to a shear flow and can be associated
with an atmospheric front.

\begin{remark} If $\pi_0 = \rm const$, then the condition
(\ref{constraint}) is eliminated and  the problem can be reduced to
the solution of the Euler equations. Even in this case possible
steady solutions can be very complicate \cite{Wu_Ov_Zab},
\cite{Yang_Kubota}, \cite{Jia}.
\end{remark}

\begin{remark}
Equations (\ref{J_master}) and (\ref{J}) imply the Dubreil-Jacotin
equation \cite{Dubreil-Jacotin}
$$
\Delta \Phi + \frac{\pi_0'(\Phi)}{2(\gamma-1)\pi_0(\Phi)}|\nabla
\Phi|^2=F(\Phi)
$$
if we take into account that $\pi_0=\pi_0(\Phi)$. The last property
follows from (\ref{constraint}). The arbitrary functions
$\pi_0(\Phi)$ and $F(\Phi)$ give initial distribution of density and
vorticity.
\end{remark}

\subsection{Algorithm of solution, the smooth case}

\begin{theorem} \label{discrepancy}
Let $\Phi(x_1, x_2)$ be a smooth solution to the system
\begin{equation}\label{ei_Phi}
|\nabla \Phi|^2=G(\Phi),
\end{equation}
\begin{equation}\label{delta_Phi}
\Delta \Phi=R(\Phi),
\end{equation}
with a differentiable function $G$ and integrable function $R$. Then
$\Phi$ solves (\ref{J_master}),(\ref{J}), and therefore it is a part
of solution to the system (\ref{constraint}), (\ref{Phi_pi_0}).
\end{theorem}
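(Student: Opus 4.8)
The plan is to show that if $\Phi$ satisfies the two semilinear equations $|\nabla\Phi|^2=G(\Phi)$ and $\Delta\Phi=R(\Phi)$, then both Jacobians vanish. The key observation is that both $|\nabla\Phi|^2$ and $\Delta\Phi$ become functions of $\Phi$ alone, so I expect the argument to reduce to the elementary fact that the Jacobian $J(\Phi,H(\Phi))$ of $\Phi$ with any function $H$ depending only on $\Phi$ is identically zero.

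Let me write this out. I would prove the following lemma first.

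\begin{lemma}
For any two smooth functions $A(x_1,x_2)$ and $B(x_1,x_2)$, if $B=H(A)$ for some differentiable function $H$ of one variable, then $J(A,B)=A_{x_1}B_{x_2}-A_{x_2}B_{x_1}=0$.
\end{lemma}

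The proof is immediate: by the chain rule $B_{x_i}=H'(A)\,A_{x_i}$ for $i=1,2$, whence
\begin{equation*}
J(A,B)=A_{x_1}H'(A)A_{x_2}-A_{x_2}H'(A)A_{x_1}=H'(A)\bigl(A_{x_1}A_{x_2}-A_{x_2}A_{x_1}\bigr)=0.
\end{equation*}

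Now I would apply this twice with $A=\Phi$. First, equation (\ref{ei_Phi}) states $|\nabla\Phi|^2=G(\Phi)$, so $|\nabla\Phi|^2$ is a function of $\Phi$; taking $B=|\nabla\Phi|^2=G(\Phi)$ in the lemma gives $J(\Phi,|\nabla\Phi|^2)=0$, which is precisely (\ref{J_master}). Second, equation (\ref{delta_Phi}) states $\Delta\Phi=R(\Phi)$, so $\Delta\Phi$ is a function of $\Phi$; taking $B=\Delta\Phi=R(\Phi)$ gives $J(\Phi,\Delta\Phi)=0$, which is precisely (\ref{J}). Since the excerpt has already established that (\ref{master}) and (\ref{cond_pi}) are equivalent to (\ref{J_master}) and (\ref{J}), and that any $\Phi$ satisfying both is part of a solution to (\ref{constraint}), (\ref{Phi_pi_0}) with $\pi_0$ recovered by (\ref{pi_formula}), the conclusion follows. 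I do not anticipate a serious obstacle here: the only point requiring mild care is the differentiability/integrability hypotheses on $G$ and $R$, which are exactly what is needed to guarantee that the chain-rule manipulation above is legitimate and that the formula (\ref{pi_formula}) for $\pi_0$ makes sense; the differentiability of $G$ ensures $|\nabla\Phi|^2$ is $C^1$ so that the Jacobian identity is meaningful, while integrability of $R$ is what lets $\pi_0$ be reconstructed by the quadrature in (\ref{pi_formula}).
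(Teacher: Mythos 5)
Your proposal is correct and takes essentially the same route as the paper: the paper's one-line appeal to the fact that $|\nabla\Phi|^2,\ \Phi$ and $\Delta\Phi,\ \Phi$ are ``functionally dependent'' is exactly your chain-rule lemma, and both arguments then fall back on the quadrature (\ref{pi_formula}) to produce $\pi_0$. The only addition in the paper is that it evaluates that quadrature explicitly under the hypotheses (\ref{ei_Phi}), (\ref{delta_Phi}), obtaining the closed form $\pi_0=\frac{1}{2c_0}\left(|\nabla\Phi|^2-2R_1(\Phi)-2l\Phi\right)+{\rm const}$ with $R_1=\int_{\Phi_0}^{\Phi}R(\eta)\,d\eta$, whereas you leave $\pi_0$ in the general form (\ref{pi_formula}).
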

\begin{proof}
Equations (\ref{ei_Phi})  and (\ref{delta_Phi}) means that $|\nabla
\Phi|^2$ and $\Phi$, $\Delta \Phi$ and $\Phi$, respectively, are
functionally dependent,  this results (\ref{J_master}) and
(\ref{J}).
  Thus, $\Phi$ solves the system
(\ref{constraint}), (\ref{Phi_pi_0}) together with $\pi_0$ found by
(\ref{pi_formula}).  Namely, taking into account (\ref{ei_Phi}) and
(\ref{delta_Phi}) we obtain
$$
\pi_0\,= -\, \frac{1}{2c_0}\, \left[\int(|\nabla \Phi|^2\,-
\,2R_1(\Phi)+2l \Phi)_{x_1}\,dx_1 +\right.$$$$\left.
 \int(|\nabla \Phi|^2\,-
\,2R_1(\Phi)+2l \Phi)_{x_2}\,dx_2 \right]=
$$
\begin{equation*} \label{pi_exact}
 \frac{1}{2c_0}\, \left(|\nabla \Phi|^2-2R_1(\Phi)-2l \Phi\right)\,+\,\rm
 const,
\end{equation*}
where $R_1=\int\limits_{\Phi_0}^\Phi R(\eta) d\eta.$

\end{proof}

\subsection{Relation with the eikonal equation}

\begin{proposition}\label{reducing_ei}
Assume that a function $\xi$ satisfies in a domain $\Omega$ the
standard eikonal equation
\begin{equation}\label{ei}
|\nabla \xi|^2=1.
\end{equation}
Then any differentiable monotone function $\Phi=\mathcal F(\xi)$
satisfies equation (\ref{ei_Phi}) with $G(\Phi)= (\Phi'(\xi))^2,$
$\xi=\mathcal F^{-1}(\Phi)$.
\end{proposition}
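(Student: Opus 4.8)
The plan is to verify directly that $\Phi=\mathcal F(\xi)$ satisfies $|\nabla\Phi|^2=G(\Phi)$ with the claimed $G$. The computation is short, so the main task is simply to organize it cleanly and note the role of the monotonicity hypothesis.

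Let me work it out. By the chain rule, $\nabla\Phi = \mathcal F'(\xi)\,\nabla\xi$, hence $|\nabla\Phi|^2 = (\mathcal F'(\xi))^2\,|\nabla\xi|^2$. Using the eikonal equation $|\nabla\xi|^2=1$, this collapses to $|\nabla\Phi|^2=(\mathcal F'(\xi))^2$. The monotonicity of $\mathcal F$ guarantees that $\mathcal F$ is invertible, so we may write $\xi=\mathcal F^{-1}(\Phi)$ and express the right-hand side as a function of $\Phi$ alone, namely $G(\Phi)=(\mathcal F'(\mathcal F^{-1}(\Phi)))^2=(\Phi'(\xi))^2$ with $\xi=\mathcal F^{-1}(\Phi)$. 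This is exactly equation (\ref{ei_Phi}).

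So the proof is essentially a one-line chain-rule computation followed by substitution of the eikonal identity. There is no real obstacle. The only point requiring care is the logical status of the monotonicity assumption: it is not needed to derive the relation $|\nabla\Phi|^2=(\mathcal F'(\xi))^2$, but it is needed to present the right-hand side as a genuine function $G$ of the single variable $\Phi$, since otherwise $\xi$ could not be recovered from $\Phi$. I would state this explicitly so the reader sees where the hypothesis is used.

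One might also remark, to connect with Theorem~\ref{discrepancy}, that this proposition only furnishes the first equation (\ref{ei_Phi}); the second requirement (\ref{delta_Phi}), that $\Delta\Phi$ be a function of $\Phi$, imposes a further condition on $\xi$ and $\mathcal F$ and is not automatic. Thus the proposition should be read as a convenient mechanism for generating solutions of (\ref{ei_Phi}) from solutions of the standard eikonal equation, reducing part of the problem to a well-studied equation.
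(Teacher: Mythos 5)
Your proof is correct and is exactly the ``direct computation'' that the paper's proof consists of (the paper gives no details, simply stating that the proof is a direct computation): chain rule, substitution of $|\nabla\xi|^2=1$, and monotonicity to invert $\mathcal F$ so the right-hand side becomes a function of $\Phi$. Your additional remarks on where monotonicity enters and on (\ref{delta_Phi}) not being automatic are accurate but supplementary.
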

\begin{proof}
Proof is a direct computation.
\end{proof}


\section{Construction of localized frozen patterns}

\subsection{The smooth case}

Let $\Omega$ be a compact domain in ${\mathbb R}^2$ with smooth
boundary. We assume that there exist a couple of functions $G(\Phi),
R(\Phi))$ such that the stream function $\Phi$ satisfies equations
(\ref{ei_Phi}), (\ref{delta_Phi}) for $x\in \Omega$ in the classical
sense and
\begin{equation}\label{phi_CD}
\Phi|_{\partial \Omega}\,=\,0,\quad \nabla\Phi|_{\partial
\Omega}\,=\,0.
\end{equation}


\begin{proposition}\label{localized_pi1}
Let $\Phi\in C^2(\overline{\Omega})$ be a solution to
(\ref{ei_Phi}), (\ref{delta_Phi}) for $x\in \Omega$ with boundary
conditions (\ref{phi_CD}). Then
 the solution to (\ref{pi1}) in the domain ${\mathbb R}^2\setminus \Omega$
in the local coordinate system do not depend of the local field.
\end{proposition}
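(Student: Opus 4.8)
The plan is to exploit the boundary conditions (\ref{phi_CD}) to show that the local velocity field vanishes identically outside $\Omega$, so that the transport equation (\ref{pi1}) degenerates into a trivial one there and thereby decouples from the local field.

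First I would extend $\Phi$ to all of ${\mathbb R}^2$ by setting $\Phi\equiv 0$ on ${\mathbb R}^2\setminus\Omega$. Because $\Phi|_{\partial\Omega}=0$ and $\nabla\Phi|_{\partial\Omega}=0$, the extended function matches in both value and gradient across $\partial\Omega$, hence is $C^1$ on ${\mathbb R}^2$ and Lipschitz. Consequently ${\bf u}=\nabla_\bot\Phi$ is continuous on the whole plane, satisfies ${\bf u}|_{\partial\Omega}=0$, and vanishes identically on ${\mathbb R}^2\setminus\Omega$. This is the step that converts the interior boundary data into a genuine exterior statement about the flow.

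Next I would substitute ${\bf u}\equiv 0$ into (\ref{pi1}). Since both $(\nabla\pi_1\cdot{\bf u})$ and $\pi_1\,{\rm div}\,{\bf u}$ vanish in the exterior, equation (\ref{pi1}) reduces there to $\partial_t\pi_1=0$, so that $\pi_1(t,{\bf x})=\pi_1(0,{\bf x})$ for every ${\bf x}\in{\mathbb R}^2\setminus\Omega$ and all $t\ge 0$. The exterior value of $\pi_1$ is thus frozen at its initial datum and carries no dependence on the couple $({\bf u},\pi_0)$, which is precisely the assertion that it does not depend on the local field.

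The one point that genuinely needs care — and the main obstacle — is to rule out any influence propagating from the interior across $\partial\Omega$ along characteristics. I would settle this by the method of characteristics: the characteristics of (\ref{pi1}) are the integral curves of $\dot{\bf x}={\bf u}({\bf x})$, and since ${\bf u}$ vanishes on $\partial\Omega$, every boundary point is a fixed point of this Lipschitz flow. Uniqueness of solutions to the characteristic ODE then forbids any trajectory from reaching $\partial\Omega$ in finite time, so ${\mathbb R}^2\setminus\Omega$ is invariant under the flow. Hence the transport of $\pi_1$ in the exterior is completely isolated from the interior dynamics, and the claim follows.
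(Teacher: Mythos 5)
Your proof is correct and follows essentially the same route as the paper: the boundary conditions (\ref{phi_CD}) force ${\bf u}=\nabla_\bot\Phi$ to vanish on $\partial\Omega$, the velocity field is extended by zero to the whole plane, and equation (\ref{pi1}) then simply preserves the initial datum of $\pi_1$ outside $\Omega$. Your characteristics argument ruling out propagation across $\partial\Omega$ is a welcome elaboration of what the paper states tersely as ``the solution keeps its initial value for $x\notin\Omega$.''
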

\begin{proof} Condition (\ref{phi_CD}) means  ${\bf u}|_{\partial \Omega}
=0$, therefore the velocity field can be extended smoothly to the
whole plane ${\mathbb R}^2$ as zero. Then the solution to
(\ref{pi1}) keeps its initial value for $x\notin \Omega$. $\square$
\end{proof}

Thus, the solution to problems  (\ref{ei_Phi}), (\ref{delta_Phi}),
(\ref{phi_CD}) gives a "frozen pattern" inside the domain $\Omega$.

Let us show that the class of solutions satisfying the conditions of
Proposition \ref{localized_pi1} is not empty.
 The simplest situation is where $\Omega$ is a disc of radius
$r$. The solution to (\ref{ei_CD_gen}) with boundary value $\phi=r$
is $\xi=\sqrt{x_1^2+x_2^2}$, the differentiability fails only at the
origin. Nevertheless, the solution to (\ref{phi_CD}) based on
(\ref{ei_CD_gen}) can be smooth everywhere in $\Omega$ if we take
$F(\xi)=\xi^2.$  Further, we can take  $F=\lambda(s)$, $s=|\xi^2|$,
with any smooth monotone on $[0,1]$ function $\lambda$ such that
$\lambda(r^2)=0,$ $\frac{d^i\lambda}{d s^i}\Big|_{s=r^2}=
\frac{d^i\lambda}{d s^i}\Big|_{s=0}=0, $ $i=1,2$. Further, $ \Delta
\Phi(\xi)=\Phi''(\xi)|\nabla \xi|^2 + \Phi'(\xi) \Delta \xi=
\Phi''(\xi)+ \frac{\Phi'(\xi)}{\xi}\equiv R(\xi). $

This example gives a variety of axisymmetric vortex structures.

\begin{remark} As follows from \cite{Padula}, Sec.2.3.3, these structures are
nonlinearly stable with respect to smooth perturbations keeping zero
boundary conditions.
\end{remark}

\subsection{Non-smooth case}


Now we consider non-classical solution to the system (\ref{ei_Phi}),
(\ref{delta_Phi}), allowing to construct the "frozen patterns"
containing discontinuities.

\subsubsection{Generalized solution to the eikonal equation (\ref{ei_Phi})}


It is well known that the problem of constructing a nonlocal theory
of the Cauchy problem for nonlinear Hamilton-Jacobi equation (in
particular, for (\ref{ei}) and (\ref{ei_Phi})) inevitably leads to
the necessity of introducing a generalized solution. A natural
extension of the notion of the solution in the sense of "almost
everywhere", i.e. a locally Lipschitz continuous function satisfying
the equation everywhere in the considered domain except possibly at
the points of a set of zero measure. Generally speaking, this
solution is not unique. In \cite{Kruzhkov} it was introduced the
following notion of generalized solution, which we formulate
applying to our case.

Let $B_r(y)=\{x\in{\mathbb R}^2:|x-y|<r   \}$. We  denote by
${\mathcal Lip}_{loc}(D)$ the totality of functions $f(x)$ defined
on a set $D\in {\mathbb R}^2$ and satisfying the Lipschitz condition
on the subset $B_r(y)\cap D$. It is known \cite{Evans} that $f(x)$
has at almost every interior point of $D$ a differential and hence a
gradient $\nabla u$.

Further, let $\Omega$ is a bounded domain in ${\mathbb R}^2$ and
$f\in {\mathcal Lip}_{loc}(\bar \Omega)$. We say that $f(x)$ belongs
to the stability class $E(\Omega)$ if the following inequality is
satisfied for every $x, x+\Delta x, x-\Delta x \in B_\delta
(y)\subset B_{2 \delta}\subset \Omega $, $(\Delta x \ne 0)$:
$$
\frac{\Delta^2 f}{|\Delta x|^2}\equiv \frac{f(x+\Delta x)-2
f(x)+f(x-\Delta x)}{|\Delta x|^2}\ge -C(y,\delta)=\rm const.
$$
\begin{definition} \label{4.1}A function $\Phi\in {\mathcal Lip}_{loc}(\bar \Omega)\cap E(\Omega)$
is called a generalized solution of the Cauchy-Dirichlet problem
\begin{equation}\label{ei_CD_gen}
|\nabla \Phi|^2=G(\Phi), \quad \xi|_{\partial \Omega}\,=\,\phi,
\end{equation}
where $G(\Phi)$ is a smooth and almost everywhere positive function,
if it satisfies equation (\ref{ei_Phi}) almost everywhere in
$\Omega$ and takes the boundary values.
\end{definition}

\subsubsection{Generalized solution to the nonlinear Poisson
equation (\ref{delta_Phi})} We give the definition of the
generalized solution following \cite{Evans_pde}.

\begin{definition}\label{4.2} $\Phi\in H^1(\Omega)$ is called the
generalized solution to the boundary problem
$$\Delta \Phi =R(\Phi),\quad x\in \Omega, \quad \Phi=\phi, \quad x\in \partial \Omega,$$
with  $R\in L_2(\Omega)$, $\phi\in H^1(\Omega),$ if
$$\Phi-\phi\in {H}_0^1(\Omega)$$
and
$$
\int\limits_{\Omega}(\nabla \Phi, \nabla v)
\,dx\,=\,-\int\limits_{\Omega}R(\Phi)  v \,dx
$$
for all $v\in {H}_0^1(\Omega).$
\end{definition}

\subsection{Example of discontinuous frozen pattern}

Now we construct a discontinuous solution to the system
(\ref{ei_Phi}), (\ref{delta_Phi}) in the sense of Definitions
\ref{4.1} and \ref{4.2}, satisfying zero boundary conditions.


We consider the domain $\Omega$ such that $(x_1,x_2)\in \Omega $ if
$x_1^2+x_2^2<1$ and $x_1<\frac12$.

It can be readily checked that the function
$$
\xi(x_1,x_2)=\left\{\begin{array}{cc} 2x_1,& -\sqrt{3} x_1<
x_2<\sqrt{3} x_1,\\ \sqrt{x_1^2+x_2^2},& otherwise,
\end{array}\right.
$$
solves the eikonal equation (\ref{ei}) in $\Omega $ with boundary
value $\xi=1$. Let us take again as $F=\lambda(s)$, $s=|\xi^2|$, any
monotone smooth function on $[0,1]$ such that $\lambda(1)=0,$
$\frac{d^i\lambda}{d s^i}\Big|_{s=r^2}= \frac{d^i\lambda}{d
s^i}\Big|_{s=0}=0, $ $i=1,2$.

Thus, $\Phi\in {\mathcal Lip}_{loc}(\bar \Omega)$ solves
(\ref{ei_Phi}) and (\ref{delta_Phi}) in the sense of Definition
\ref{4.2} with $R(\Phi)= \Phi''(\xi) +
\frac{\Phi'(\xi)}{\xi}(1-\chi(\Omega_1)),$ where $\chi(\Omega_1)$ is
the characteristic function of the set $\Omega_1$, where $\Omega_1$
is a subset of $\Omega$, consisting of points $(x_1,x_2)$ such that
$-\sqrt{3} x_1< x_2<\sqrt{3} x_1$.  Moreover, $\Phi$ satisfies the
boundary conditions (\ref{phi_CD}). Fig.1 presents the velocity
field for this case. The function $\lambda$ was chosen such that the
pressure $\pi_0$ in the center is lower that on the boundary of
$\Omega$, therefore the vorticity is cyclonic (anticlockwise). A
couple of lines of discontinuities  inside the vortical structure
can be used for modeling of cold and warm fronts in a cyclone.

\begin{figure}[h]
\centerline{\includegraphics[width=0.5\columnwidth]{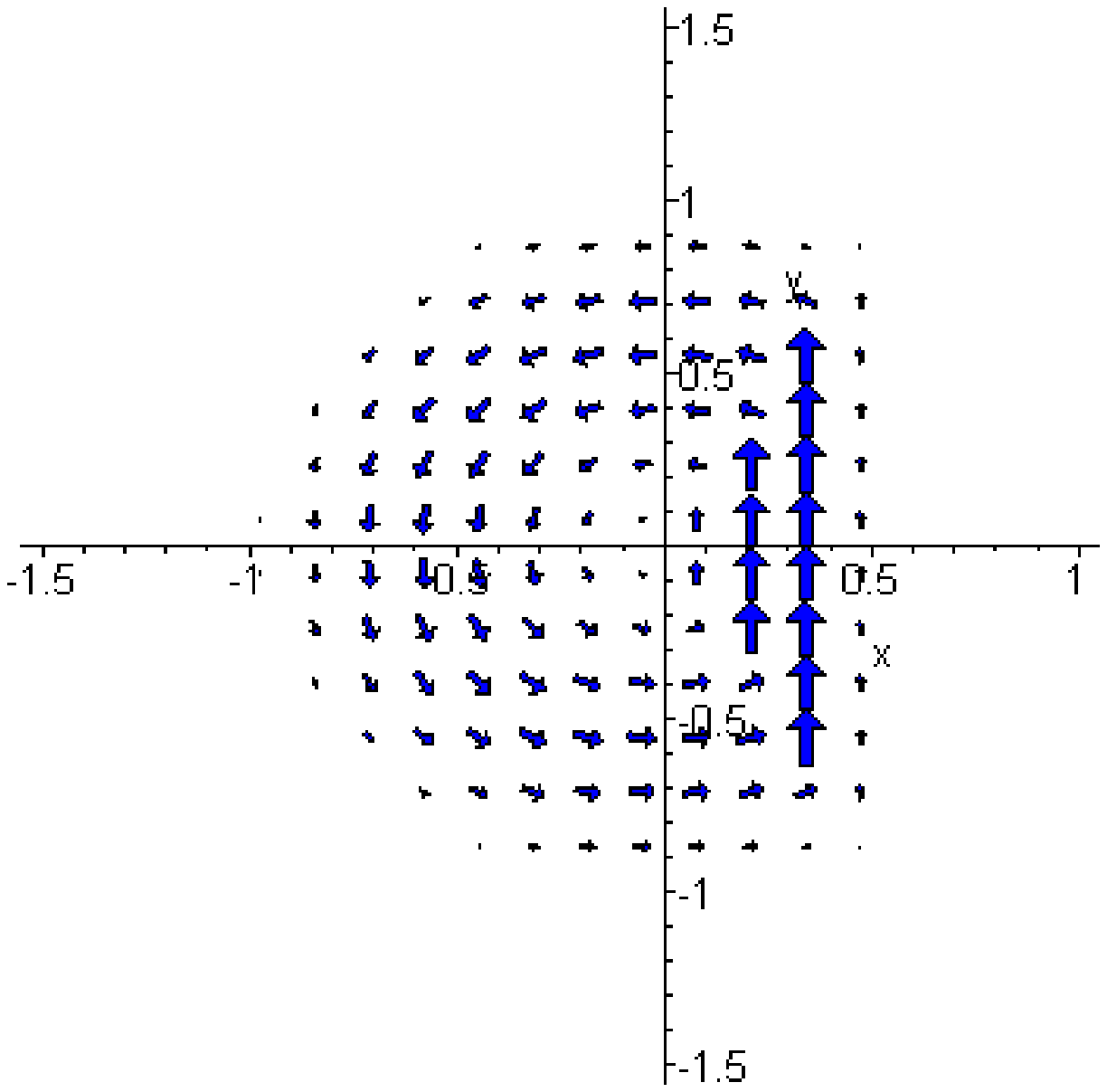}}
\caption{}\label{FFig1}
\end{figure}



\section{Estimate of the discrepancy}
\subsection{Smooth case}
\begin{proposition}
Let the stream function $\Phi$, the  solution to (\ref{ei_Phi}) and
(\ref{delta_Phi}), be differentiable in ${\mathbb R}^2$. Then the
discrepancy $Q$ (see (\ref{Q})) is completely defined by initial
data for the bearing field $\pi_1$.
\end{proposition}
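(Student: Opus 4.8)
The plan is to exploit the fact that in this frozen construction the stream function $\Phi$ (and hence the velocity field ${\bf u}=\nabla_\bot\Phi$) is obtained first, independently of the bearing field, as the solution of the eikonal--Poisson system (\ref{ei_Phi}), (\ref{delta_Phi}); it does not feed back from $\pi_1$. Since $\Phi$ is steady and differentiable on all of $\mathbb{R}^2$, the field ${\bf u}$ is a fixed, time-independent, divergence-free vector field, and because $\mathrm{div}\,{\bf u}=0$ equation (\ref{pi1}) collapses to the linear transport equation
$$
\partial_t\pi_1+({\bf u}\cdot\nabla)\pi_1=0,
$$
whose coefficients are entirely determined by the local field $\Phi$. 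The whole statement then reduces to the elementary fact that this equation has a unique solution once the initial datum $\pi_1(0,{\bf x})$ is prescribed.

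To make this precise I would solve by the method of characteristics. The characteristic system $\dot{\bf x}={\bf u}({\bf x})$ is autonomous, and $\Phi$ is itself a first integral of the flow, since
$$
\frac{d}{dt}\Phi({\bf x}(t))=(\nabla\Phi\cdot\nabla_\bot\Phi)=0;
$$
hence the trajectories are exactly the level curves of $\Phi$. Because $\Phi$ is differentiable throughout the plane the flow map $\Psi^t$ is globally defined, and along each characteristic $\pi_1$ is constant. This yields the representation $\pi_1(t,{\bf x})=\pi_1(0,\Psi^{-t}({\bf x}))$, which shows that $\pi_1(t,\cdot)$ for every $t\ge 0$ is uniquely determined by the single initial datum $\pi_1(0,\cdot)$.

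Finally I would substitute into the definition (\ref{Q}). Since $Q(t,{\bf x})=-c_0[\nabla\pi_1(t,{\bf x})-\nabla\pi_1(t,{\bf 0})]$ is built only from $\pi_1$ and its spatial gradient, and both are fixed once $\pi_1(0,\cdot)$ is given (the transporting velocity field being independent of the bearing field), the discrepancy $Q$ is completely defined by the initial data for $\pi_1$. The main technical obstacle is regularity: to guarantee that $\nabla\pi_1(t,{\bf x})$ exists, so that $Q$ is well defined, one needs the flow map $\Psi^t$ to be differentiable, which in turn requires ${\bf u}$ to be Lipschitz, i.e. $\Phi\in C^{1,1}$ rather than merely $C^1$. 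In the smooth regime considered here this holds; if \emph{differentiable} is read as only $C^1$, the differentiation step would require a separate argument to absorb the apparent loss of one derivative.
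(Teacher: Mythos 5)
Your proof is correct (up to the Lipschitz/$C^{1,1}$ caveat you flag yourself, which any argument here needs in order to speak of characteristics), but it follows a genuinely different route from the paper's. The paper never writes down the solution $\pi_1$: it observes $|Q|\le 2c_0\sup_{{\bf x}}|\nabla\pi_1(t,{\bf x})|$, reduces the claim to the sup-norm estimate (\ref{sup}), and obtains that estimate by differentiating the transport equation and asserting that $|\nabla\pi_1|^2$ obeys the pure transport equation (\ref{transport}), whence a maximum principle along characteristics; in other words, the paper reads ``completely defined'' as a quantitative bound in terms of the initial data, while you read it as determinism, solving for $\pi_1$ itself via the flow map and concluding that $Q$ is a functional of $\pi_1(0,\cdot)$. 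Your reading is not only closer to the literal statement, it is also more robust: differentiating $\partial_t\pi_1+({\bf u}\cdot\nabla)\pi_1=0$ actually produces a stretching term $2\,\nabla\pi_1\cdot\big((\nabla{\bf u})\,\nabla\pi_1\big)$ that (\ref{transport}) omits, and this term does not vanish for a general steady divergence-free field. For instance, for the shear flow $\Phi=e^{x_2}$, an admissible solution of (\ref{ei_Phi}), (\ref{delta_Phi}) with $G(\Phi)=\Phi^2$, $R(\Phi)=\Phi$, your own representation formula gives $\partial_{x_2}\pi_1(t,{\bf x})=(\partial_2\pi_1)(0,\cdot)-t\,e^{x_2}(\partial_1\pi_1)(0,\cdot)$, so $\sup|\nabla\pi_1(t,\cdot)|$ can grow linearly in time and the intermediate inequality (\ref{sup}) fails, whereas your conclusion (uniqueness of $\pi_1$, hence of $Q$, given the initial datum) is untouched. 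Two small points to tighten in your write-up: mere differentiability of $\Phi$ gives neither uniqueness nor global existence of the characteristics, so the Lipschitz hypothesis should be stated up front rather than as a closing remark; and global-in-time existence of the flow $\Psi^t$ then comes for free from (\ref{ei_Phi}), since $\Phi$ is a first integral of $\dot{\bf x}={\bf u}({\bf x})$ and hence the speed $|{\bf u}|=\sqrt{G(\Phi)}$ is constant along each trajectory.
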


\begin{proof} Since $ |Q|^2 \le 4 c_0^2 \sup\limits_{{\bf x}\in{\mathbb R}^2} |\nabla \pi_1(t,{\bf
x})|^2$, it is enough to prove that
\begin{equation}\label{sup}\sup\limits_{{{\bf x}\in \mathbb R}^2} |\nabla
\pi_1(t,{\bf x})|^2\le \sup\limits_{{{\bf x}\in \mathbb R}^2}
|\nabla \pi_1(0,{\bf x})|^2. \end{equation} Due to the
divergence-free condition equation (\ref{pi1}) has the form
$$
\partial_t \pi_1 +  (\nabla \pi_1 \cdot {\bf u})= 0.
$$
We take the gradient of this equation and then multiply by $\nabla
\pi_1$ to obtain the transport equation with smooth coefficients
$(u_1,u_2)$
\begin{equation}\label{transport}
\partial_t |\nabla \pi_1|^2 +  (\nabla |\nabla \pi_1|^2 \cdot {\bf u})= 0.
\end{equation}
Thus, the initial value are transported along smooth characteristic
curves. This immediately implies (\ref{sup}).
\end{proof}

\subsection{Non-smooth case}

Many papers are devoted to the transport equation with non-smooth
coefficients.  R.J. DiPerna and P.-L. Lions have proved this
uniqueness result under the assumption the coefficients are in the
Sobolev class $W^{1,1}$ (locally in space), and later L. Ambrosio
extended this result to coefficients of class $BV$ (locally in
space). Previous results on two-dimensional transport equation are
due to Bouchut and Desvillettes  \cite{Bouchut_Desvillettes}, Hauray
\cite{Hauray}, Colombini and Lerner \cite{Colombini}.

To show the maximum principle for (\ref{transport}) in the case on
non-smooth velocity ${\bf u}$, we use the following theorem on the
well-posedness of the transport equation proved in
\cite{Alberti_Bressan_Grippa1}, \cite{Alberti_Bressan_Grippa2}:

\begin{theorem}\label{th_abc}{\bf(\cite{Alberti_Bressan_Grippa1})} Let ${\bf b}: {\mathbb R}^2 \to {\mathbb R}^2$ be a bounded,
divergence-free, autonomous vector field on the plane admitting a
Lipschitz compactly supported potential $f: {\mathbb R}^2 \to
{\mathbb R}$, that is $\bf b=\nabla_\bot f$. The Cauchy problem for
$$
u_t + {\rm div} ({\bf b}u) = 0 $$ admits a unique bounded
generalized solution in the sense of distributions for every bounded
initial datum if and only if the potential $f$ satisfies the weak
Sard property.
\end{theorem}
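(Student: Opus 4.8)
The plan is to exploit the special structure forced by two dimensions: since $\mathbf{b}=\nabla_\bot f$ is divergence-free and autonomous, the potential $f$ is a first integral, $\mathbf{b}\cdot\nabla f\equiv 0$, so every integral curve of $\dot x=\mathbf{b}(x)$ stays on a single level set $\{f=h\}$. The whole uniqueness question therefore reduces to understanding how the planar equation $u_t+\mathrm{div}(\mathbf{b}u)=0$ (which, by $\mathrm{div}\,\mathbf{b}=0$, is the transport equation $u_t+\mathbf{b}\cdot\nabla u=0$) decomposes along the level-set foliation of $f$. I would organize everything around a disintegration of area measure by the coarea formula, writing $\mathcal{H}^2=\int_{\mathbb{R}}\nu_h\,dh$ with each $\nu_h$ carried by $\{f=h\}$, and then solving an essentially one-dimensional problem on almost every level set.

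First I would establish the structure of the level sets. For a compactly supported Lipschitz $f$, for Lebesgue-a.e.\ value $h$ the level set $\{f=h\}$ consists of countably many rectifiable curves along which $|\nabla f|\neq 0$; on each such nondegenerate component the field $\mathbf{b}$ is tangent and nonvanishing, and after an arclength reparametrization the restricted dynamics is a constant-speed flow on a closed curve. On this one-dimensional stage the continuity equation is well-posed, so bounded distributional solutions are uniquely determined on each nondegenerate component. The real task is to glue these one-dimensional uniqueness statements into a statement in the plane, and this is exactly where the weak Sard property enters.

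For the sufficiency direction I would argue that the weak Sard property, namely that $f_\#\bigl(\mathcal{H}^2\big|_{S\setminus E}\bigr)$ is singular with respect to Lebesgue measure on $\mathbb{R}$ (here $S=\{\nabla f=0\}$ is the critical set and $E$ is the union of the positive-length connected components of the level sets), guarantees that the part of the area measure invisible to the regular foliation --- the mass sitting on critical points that lie on no nondegenerate level curve --- projects under $f$ onto a Lebesgue-null set of values. Consequently the disintegration $\{\nu_h\}$ is, for a.e.\ $h$, carried by the nondegenerate components alone, so two candidate solutions, which by the previous step agree on every nondegenerate component, must in fact agree $\mathcal{H}^2$-a.e. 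I would make this rigorous through the superposition principle (Smirnov, Ambrosio), representing any bounded solution as a superposition of measures transported along integral curves and showing that weak Sard forces this representation to be unique.

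The converse, and the main obstacle, is to show that failure of weak Sard produces genuine non-uniqueness. When $f_\#\bigl(\mathcal{H}^2\big|_{S\setminus E}\bigr)$ has a nontrivial absolutely continuous part, there is a positive-measure set of critical values whose fibers carry area that the level-set flow cannot account for; on this hidden mass the transport is underdetermined, and one can redistribute it in at least two distinct time-dependent ways that are both admissible bounded distributional solutions with the same initial datum. Carrying out this construction precisely --- isolating the hidden mass, building two competing solutions, and checking that each is bounded and solves the equation in the sense of distributions --- is the delicate step, together with the measure-theoretic bookkeeping needed to make the coarea disintegration fully compatible with the PDE and to reconcile it, in the non-$BV$ regime beyond the DiPerna--Lions--Ambrosio theory, with the renormalization property.
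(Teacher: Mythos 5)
This theorem is not proved in the paper at all: it is quoted, with attribution, from \cite{Alberti_Bressan_Grippa1} (see also \cite{Alberti_Bressan_Grippa2}) and is used as a black box in the corollary that follows it. So there is no proof of the paper to compare against; your sketch has to be measured against the original Alberti--Bianchini--Crippa argument, whose overall architecture --- disintegration of the area measure along the level sets of $f$, reduction to one-dimensional continuity equations on the level curves, the weak Sard property as the exact dividing line, and an explicit construction of two solutions when it fails --- your outline does reproduce.

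There is, however, a genuine error at the heart of the sketch. You state the weak Sard property as singularity of $f_\#\bigl(\mathcal{H}^2|_{S\setminus E}\bigr)$, where $S=\{\nabla f=0\}$ and $E$ is the union of the positive-length connected components of the level sets. The correct condition is singularity of $f_\#\bigl(\mathcal{L}^2|_{S\cap E}\bigr)$: what matters is the area carried by critical points that lie \emph{on} the nondegenerate level curves, not off them. The part of $S$ outside $E$ is precisely the irrelevant part: there ${\bf b}=\nabla_\bot f=0$ and the corresponding components of the level sets carry no dynamics, so (after the localization step) any bounded solution is constant in time on that set and uniqueness is never threatened, however badly $f_\#\bigl(\mathcal{L}^2|_{S\setminus E}\bigr)$ behaves. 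The actual source of non-uniqueness is fiber mass sitting at critical points of the curves themselves: in the travel-time parametrization the reduced one-dimensional equation then has a velocity vanishing on a set charged by the fiber measure, and mass can wait there for an arbitrary time, producing distinct bounded solutions. Thus your sufficiency argument controls the wrong set, and your necessity argument (``hidden mass on fibers the foliation cannot account for,'' i.e.\ on $S\setminus E$) tries to manufacture non-uniqueness out of the harmless part; with your definition the stated equivalence is false. In addition, even granting the corrected definition, the proposal defers exactly the steps that constitute the proof: the localization lemma (for a bounded solution $u$ and Borel $\beta$, the function $\beta(f)u$ is again a solution, which is what legitimizes arguing level set by level set and rules out leakage between fibers) and the explicit counterexample construction. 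Naming these as ``delicate'' does not discharge them, so what you have is a program consistent in outline with the cited proof, but with its key condition inverted and its two load-bearing steps missing.
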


\begin{corollary} Let the stream-function $\Phi$, taking part of the generalized solution to system
(\ref{constraint}), (\ref{Phi_pi_0}) be Lipschitz,  compactly
supported and $|\nabla \Phi|\ne 0$ almost everywhere. Then
\begin{equation*}\label{sup_gen} \|\nabla
\pi_1(t,{\bf x})\|_{L^\infty}\le \|\nabla \pi_1(0,{\bf
x})\|_{L^\infty}, \end{equation*} therefore the discrepancy is
completely defined by initial data for $\pi_1$.
\end{corollary}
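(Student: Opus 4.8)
The plan is to recognize that this Corollary is exactly the non-smooth analogue of the preceding Proposition, with Theorem~\ref{th_abc} supplying the well-posedness that replaces the classical method of characteristics used there. First I would use the divergence-free condition to reduce the continuity equation (\ref{pi1}) to the pure transport equation $\partial_t \pi_1 + (\nabla \pi_1 \cdot {\bf u}) = 0$ with ${\bf u} = \nabla_\bot \Phi$. Since $\Phi$ is assumed Lipschitz and compactly supported, the field ${\bf b} := {\bf u} = \nabla_\bot \Phi$ is bounded, autonomous and divergence-free, and $\Phi$ is precisely the Lipschitz compactly supported potential required by Theorem~\ref{th_abc}. The quantity I actually want to control is $w := |\nabla \pi_1|^2$, which in the smooth case satisfies the same transport equation (\ref{transport}) with the same velocity ${\bf u}$; this is the object to which the maximum principle should be applied.

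The key step is verifying the weak Sard property for the potential $\Phi$, which is the only nontrivial hypothesis of Theorem~\ref{th_abc}. Here the assumption $|\nabla \Phi| \ne 0$ almost everywhere does the work: the critical set $S = \{\nabla \Phi = 0\}$ then has zero Lebesgue measure, so the restricted measure $\mathcal L^2|_S$ vanishes, and its push-forward $\Phi_\#(\mathcal L^2|_S)$ is the zero measure, which is trivially singular with respect to $\mathcal L^1$. Hence the weak Sard property holds and Theorem~\ref{th_abc} guarantees a unique bounded distributional solution of the transport equation for every bounded initial datum.

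With uniqueness in hand, I would obtain the maximum principle by approximation. Mollify $\Phi$ into smooth compactly supported $\Phi^\varepsilon$ with ${\bf u}^\varepsilon = \nabla_\bot \Phi^\varepsilon$ still divergence-free; for smooth coefficients the solution $w^\varepsilon$ of (\ref{transport}) is constant along the (measure-preserving) characteristic flow, so $\|w^\varepsilon(t)\|_{L^\infty} \le \|w^\varepsilon(0)\|_{L^\infty}$. Passing $\varepsilon \to 0$ and invoking the uniqueness from Theorem~\ref{th_abc} to identify the limit with the genuine solution, the bound survives by lower semicontinuity of the $L^\infty$ norm, giving $\|\nabla \pi_1(t,{\bf x})\|_{L^\infty} \le \|\nabla \pi_1(0,{\bf x})\|_{L^\infty}$. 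Since $|Q| \le 2 c_0 \|\nabla \pi_1(t,\cdot)\|_{L^\infty}$ by (\ref{Q}), the discrepancy is then controlled by the initial data for $\pi_1$, as claimed.

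I expect the main obstacle to be the rigorous justification that $w = |\nabla \pi_1|^2$ inherits the maximum principle in the rough setting: one cannot simply differentiate the transport equation when ${\bf u}$ is merely bounded, so the argument must route through the approximation-plus-uniqueness scheme, and it is exactly the weak Sard verification (via $|\nabla \Phi| \ne 0$ almost everywhere) that legitimizes passing to the limit. A secondary care point is confirming that the precise formulation of the weak Sard property in \cite{Alberti_Bressan_Grippa1} is indeed implied by the critical set having measure zero, rather than merely by a weaker condition on the level sets of $\Phi$.
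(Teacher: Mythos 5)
Your proposal follows essentially the same route as the paper's proof: rewrite (\ref{transport}) in divergence form, apply Theorem \ref{th_abc}, and obtain the weak Sard property from the assumption $|\nabla \Phi|\ne 0$ almost everywhere. The paper states exactly this in two sentences; your explicit push-forward verification of the weak Sard property and the mollification-plus-uniqueness argument for the maximum principle are just fleshed-out versions of the steps the paper leaves implicit.
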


\begin{proof} Ii is enough to write (\ref{transport}) in the divergent
form
\begin{equation*}\label{transport_div}
\partial_t |\nabla \pi_1|^2 +  {\rm div} (|\nabla \pi_1|^2 \nabla_\bot \Phi)= 0.
\end{equation*}
and apply Theorem \ref{th_abc}. As noticed in
\cite{Alberti_Bressan_Grippa1}, the weak Sard property is implied by
 $|\nabla \Phi|\ne
0$ almost everywhere.
\end{proof}

\end{document}